
\documentclass[a4paper]{llncs}
\usepackage{amsmath}
\usepackage{amsfonts}
\usepackage{amssymb}
\usepackage{graphics}
\usepackage{epsfig}
\usepackage{graphicx}
\usepackage{epstopdf}
\usepackage[all]{xy}
\usepackage{color,soul}
\pagestyle{plain}

\newcommand{\cl}[1]{\operatorname{cl}_R(#1)}
\newcommand{\F}{\mathbb F}
\newcommand{\Z}{\mathbb Z}
\newcommand{\Perf}{\operatorname{Perf}}

\title{On the non-existence of perfect codes in the NRT-metric}
\author{Viviana Gubitosi, Aldo Portela \and Claudio Qureshi}
\institute{Instituto de Matem\'atica y estad{\'\i}stica Rafael Laguarda, Universidad de la Rep{\'u}blica, Montevideo, 11300, Uruguay.\\
\email{gubitosi@fing.edu.uy, aldo@fing.edu.uy, cqureshi@fing.edu.uy}}

\begin{document}

\maketitle

\begin{abstract}
In this paper we consider codes in $\F_q^{s\times r}$ with packing radius $R$ regarding the NRT-metric (i.e. when the underlying poset is a disjoint union of $s$ chains with the same length $r$) and we establish necessary condition on the parameters $s,r$ and $R$ for the existence of perfect codes. More explicitly, for $r,s\geq 2$ and $R\geq 1$ we prove that if there is a non-trivial perfect code then $(r+1)(R+1)\leq rs$. We also explore a connection to the knapsack problem and establish a correspondence between perfect codes with $r>R$ and those with $r=R$. Using this correspondence we prove the non-existence of non-trivial perfect codes also for $s=R+2$.
\end{abstract}

\section{Introduction}

Most of the research in error-correcting codes deal with the Hamming metric which can efficiently approach the communication problems arising from channels where the channel noise generates equiprobable errors. However, when possible errors form patterns of specific shape, Hamming metric is not the appropriate one to suit the characteristics of the channels. Several metrics have been introduced to deal with these possible patterns of errors. In this way, in 1997, Rosenbloom and Tsfasman \cite{RT97} introduced a metric on linear spaces over finite fields, motivated by applications to interference in parallel channels of communication systems. This metric was previously used by Niederreiter \cite{Niederreiter87} related to the study of sequences with low discrepancy which play an important role in quasi-Monte Carlo methods and other application in numerical analysis. Nowadays this metric is known as the Niederreiter-Rosenbloom-Tsfasman metric (or NRT metric for short). Several central concepts on codes in Hamming spaces have been investigated in NRT spaces, such as perfect codes, MDS codes, weight distribution, self-dual NRT-codes, packing and covering problems, see for instance \cite{BGL95,CM15,CMS19,Niederreiter87,RT97,SA20,YSBY10}. Some of these concepts have been investigated also in the context of block codes in NRT spaces \cite{PFA10}. Construction of codes such as Reed-Solomon and BCH codes have also generalized to NRT-spaces \cite{ZLA16}. On the one hand, the NRT metric is a special case of poset metric \cite{BGL95} (corresponding to the case when the poset is a disjoint union of chains of the same length) and on the other hand the NRT metric generalizes the Hamming metric (the latter corresponds to the special case when the poset is an antichain). \\

This paper deals with the existence of perfect codes in NRT spaces. An NRT space is a metric space $(\F_{q}^{s\times r},d)$, where $\F_{q}^{s\times r}$ is the set of $s\times r$ matrices over the finite field $\F_q$ (where $q$ is the number of elements of the finite field) and $d$ is the NRT metric associated with this space. This metric coincides with the poset metric associated with a poset consisting of $s$ disjoint chains of length $r$. A code $C$ is any non-empty subset of $\F_{q}^{s\times r}$ and its elements are called codewords. Let $R>0$. We say that $C\subseteq \F_{q}^{s\times r}$ is a perfect code (or more precisely, an $R$-perfect code) if the balls of radius $R$ centered at codewords are disjoint and their union is $\F_{q}^{s\times r}$. A perfect code is called trivial when $|C|=1$ or $C=\F_{q}^{s\times r}$. The problem of determining for which parameters $(s,r,R)$, there is a (non-trivial) $R$-perfect code $C\subseteq \F_{q}^{s\times r}$ is a difficult task and there are few results in this direction. The case $r=1$ (corresponding to the Hamming metric) was settled by  Tiet{\"a}v{\"a}inen in \cite{Tietavainen73} .

\begin{theorem}[\cite{Tietavainen73}]
Every non-trivial perfect code $C\subseteq \F_{q}^{s\times 1}$, regarding the Hamming metric, have the same parameters that a repetition code, a Hamming code or a Golay code. Therefore, we have the following four possibilities:
\begin{itemize}
\item[i)] (Repetition code) $C\subseteq \F_{2}^{s\times 1}$ with $s\equiv 1 \pmod{2}$, $|C|=2$ and $R=1$;
\item[ii)] (Hamming code) $C\subseteq \F_{q}^{s\times 1}$ with $s=\frac{q^i-1}{q-1}$, $|C|=q^{s-i}$ and $R=1$;
\item[iii)] (Binary Golay code) $C\subseteq \F_{2}^{23\times 1}$ with $|C|=2^{11}$ and $R=3$ or
\item[iv)] (Ternary Golay code) $C\subseteq \F_{3}^{11\times 1}$ with $|C|=3^6$ and $R=5$.
\end{itemize}
\end{theorem}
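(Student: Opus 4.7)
This is the classical theorem classifying perfect codes over finite fields (Tiet\"av\"ainen, with parallel work of Zinov'ev and Leont'ev), so my plan must follow the standard two-step attack: combine the sphere-packing (Hamming) bound with Lloyd's theorem, and conclude through an analytic-number-theoretic analysis of the roots of certain polynomials.

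\emph{Step 1 (Hamming bound).} I would first observe that if $C\subseteq \F_q^{s\times 1}$ is $R$-perfect, then the balls of radius $R$ tile the space, giving the identity $|C|\cdot \sum_{i=0}^{R}\binom{s}{i}(q-1)^{i} = q^{s}$. For $R=1$ this forces $1+s(q-1)$ to be a power of $q$; a short case analysis separates the binary repetition parameters (with $s$ odd) from the Hamming parameters $s=(q^{i}-1)/(q-1)$, and existence in both sub-cases is classical, so the first two items in the list are realised exactly.

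\emph{Step 2 (Lloyd's theorem).} For $R\geq 2$ the volume identity alone is insufficient, so I would invoke Lloyd's theorem: the existence of an $R$-perfect code forces the Lloyd polynomial $L_{R}(x)=\sum_{j=0}^{R}(-1)^{j}\binom{x-1}{j}\binom{s-x}{R-j}(q-1)^{R-j}$, essentially a Krawtchouk polynomial, to have $R$ distinct integer roots lying in $\{1,\ldots,s\}$. Combined with Step~1 this yields a rigid Diophantine system tying $q$, $s$, $R$ and the locations of these roots.

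\emph{Step 3 (the main obstacle).} The hard part is to eliminate all but finitely many sporadic parameter sets when $R\geq 3$. This requires quantitative estimates on the roots of Krawtchouk polynomials, showing that asymptotically they cluster near $(s(q-1)\pm c\sqrt{s(q-1)})/q$ and are only rarely integral. Obtaining these estimates is Tiet\"av\"ainen's analytic contribution, proved via careful bounds on exponential sums, and it forces $(q,R)\in\{(2,3),(3,5)\}$, matching the Golay parameters $(s,|C|)=(23,2^{11})$ and $(11,3^{6})$ respectively. Assembling the conclusions of Steps 1--3 then exhibits exactly the four parameter families listed in the theorem.
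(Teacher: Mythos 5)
The paper does not prove this theorem at all: it is quoted verbatim from \cite{Tietavainen73} as background, so there is no internal proof to compare your attempt against. Judged on its own terms, your outline correctly names the two classical pillars (the sphere-packing identity $|C|\sum_{i=0}^{R}\binom{s}{i}(q-1)^{i}=q^{s}$ and Lloyd's theorem on the integrality of the Krawtchouk/Lloyd polynomial roots) and correctly locates the difficulty in the elimination of parameters for $R\geq 2$. But as written it is a roadmap, not a proof: Step 3, which is the entire content of Tiet\"av\"ainen's theorem, is only described (``obtaining these estimates is Tiet\"av\"ainen's analytic contribution'') rather than carried out, so the argument ultimately defers to the very reference the paper already cites.

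There is also a small organizational slip in Step 1. The binary repetition code of odd length $s$ is perfect with packing radius $R=(s-1)/2$, not $R=1$ (for $R=1$ the sphere-packing condition $1+s(q-1)=q^{i}$ yields exactly the Hamming parameters, and the length-$3$ repetition code is just the $[3,1,3]$ Hamming code). So the repetition family cannot be extracted from the $R=1$ case analysis; it has to be recovered in the $R\geq 2$ branch, where the Lloyd condition degenerates when the balls have radius $(s-1)/2$. (The paper's own item~i) states $R=1$ for the repetition codes, which appears to be a typo inherited into your sketch.) If you intend this as a genuine proof rather than a pointer to the literature, you would need to supply the Krawtchouk root estimates of Step 3 and repair the placement of the repetition codes.
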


The case $s=1$ has been done in the paper of Brualdi et al. \cite{BGL95} (see also \cite{FAPP18}). They show that if $R\leq r$ and $f:\F_{q}^{1\times (r-R)}\to \F_{q}^{1\times R}$ is any function then $C_{f}=\{(f(y),y): y \in \F_q^{1\times r}\}$ is an $R$-perfect code in $\F_{q}^{1\times r}$ and every perfect code comes from this construction. Since the cases $r=1$ and $s=1$ are well established, we can restrict to the case $r,s\geq 2$.

As far as we know, there are only some few results about the existence (or non-existence) of perfect codes in the NRT metric. For example, it is known that there are no (non-trivial) perfect codes for $s=2$ chains \cite{BGL95}. Constructions of new perfect codes from old ones are given in \cite[Chapter~4.3.1]{FAPP18}. The main result of this paper is a necessary condition for the existence of perfect codes that involves the three fundamental parameter $s,r$ and $R$ in a non-trivial way. We introduce the quantity $\delta = (r+1)(R+1)-sr-1$ and prove that there are no non-trivial perfect codes with parameter $(s,r,R)$ provided that $\delta\geq 0$. We also provide a new construction of perfect codes from old ones that allows to extended the non-existence result to the case $s=R+2$.\\ 

This paper is organized as follows. In Section \ref{Sec:Notation} we review some definitions and notation used throughout the paper. In Section \ref{Sec:Sticky} we introduce the notion of $R$-sticky vectors and $R$-decomposable vectors that play an important role in the proof of our results. We consider the problem of determining when two $R$-balls are disjoint in a NRT space and prove that this is equivalent to a particular instance of the knapsack problem. Section \ref{Sec:Main} is dedicated to the proof of our main result which bring a necessary condition for the existence of perfect codes in NRT spaces (Theorems \ref{Th:deltageq1} and \ref{Th:delta0}). In section \ref{Section:lifting} we present an elementary construction of new perfect codes from old one (Proposition \ref{prop:general_lifting}) and show how to use to prove the non-existence of perfect codes in the case $s=R+2$ (Proposition \ref{prop:nonexistenceFromLifting}). In the last section we conclude with some further remarks.

\section{Notation and definitions}\label{Sec:Notation}

As usual, for a positive integer $s$, we denote $[s]:=\{i\in \Z^{+}: 1\leq i \leq s\}$. For subsets $I,J \subseteq [s]$, $[s]=I \uplus J$ means that $[s]=I \cup J$ and $I \cap J = \emptyset$. If $(X,d)$ is a metric space, $x\in X$ and $R\geq 0$ we denote by $B(x,R):=\{y\in X: d(x,y)\leq R\}$ the (closed) $R$-ball centered at $x$. If in addition $X$ has a vector space structure and the metric is translation-invariant, we denote the ball of radius $R$ centered at the zero vector $0$ by $B(R):=B(0,R)$. It is clear that $B(x,R)=x+B(R)=\{x+b:b\in B(R)\}$ for every $x\in X$. In general, for $B,C \subseteq X$ and $x\in X$, we denote by $B+x:=\{b+x: b \in B\}$ and $B+C:=\{b+c: b\in B, c\in C\}$. The equality $X=B\oplus C$ means that every $x\in X$ can be written univocally as $x=b+c$ with $b\in B$ and $c\in C$ (or equivalently, $X=B+C$ and the sets $B+c$ with $c\in C$ are disjoint).\\

Let $q$ be a prime power. We denote by $\F_{q}$ the finite field with $q$ elements and by $\F_{q}^{s\times r}$ the set of $s\times r$ matrices over $\F_{q}$. In this paper we identify (in the obvious way) the matrix space $\F_{q}^{1\times r}$, the set of $r$-tuples $\F_{q}^{r}$ and the function set $\operatorname{Func}([r],\F_{q}):=\{x:[r]\to \F_q\}$.\\

The NRT-weight function $w:\F_{q}^{s\times r}\to [0,+\infty)$ is defined as follows. For $s=1$ and $x=(x_1,\ldots, x_r)\in \F_{q}^{r}$ we have $$w(x):=  \left\{ \begin{array}{ll} \max\{i\in [r]: x_i\neq 0\} & \textrm{ \ \ if } x\neq 0; \\ 0 & \textrm{ \ \ if } x= 0. \end{array} \right.$$

For $s\geq 2$, if $x= \left( \begin{array}{c} x_1 \\ \vdots \\ x_s \end{array} \right) \in \F_q^{s\times r}$ where $x_i \in \F_q^{1\times r}$ is the $i$-th row of $x$, the NRT-weight is extended additively as $w(x):=\sum_{i=1}^{s}w(x_i)$.\\

The NRT-metric $d:\F_{q}^{s\times r} \to [0,+\infty)$ is the metric induced by the NRT-weight, i.e. $d(x,y):=w(x-y)$, for all $x,y$ in $\F_q^{s\times r}$. It is clear that this metric is translation-invariant. Note that if $r=1$ and $x,y$ are in  $\F_q^{s\times 1}$ then the NRT-weight $w(x)=\#\{i\in [s]: x_i \neq 0\}$ equals the Hamming weight of $x$ and the NRT-metric $d(x,y)=\#\{i \in [s]: x_i \neq y_i\}$ equals the Hamming distance between $x$ and $y$. \\

Let $C \subseteq \F_{q}^{s\times r}$ be a code. The covering radius of $C$ is the minimum integer $R\geq 0$ such that $\F_{q}^{s\times r}$ is the union of the $R$-balls centered at codewords (i.e. $\F_{q}^{s\times r}= B(R)+C$); in this case we say that $C$ is an $R$-covering. Covering codes under the NRT metric is a very active area of research with many connections with other areas of mathematics and also with application to telecommunication \cite{CM15,Quistorff07}. The packing radius of $C$ is the maximum integer $R'\geq 0$ such that the $R'$-balls centered at codewords are disjoint; in this case we say that $C$ is an $R'$-packing. It is clear that $R'\leq R$. When $R'=R$, we say that $C$ is a perfect code (or $R$-perfect code). To avoid trivial cases we assume $|C|>1$ and $C \neq \F_q^{s\times r}$. Since the translation maps are isometries, we can assume without loss of generality that $0\in C$. The set of all (non-trivial) $R$-perfect codes in $\F_{q}^{s\times r}$ is denoted by $\operatorname{Perf}(s,r,R)$. Associated with this set we introduce the quantity $\delta:= (r+1)(R+1)-sr-1$ which play an important role in our non-existence results. \\

As mentioned in the introduction, the NRT codes can be seen as a special case of poset codes (i.e. codes in spaces endowed with a poset metric). Poset codes were introduced in the seminal paper of Brualdi et al. \cite{BGL95} and there is an extensive literature on this topic (see, for instance, the book of Firer  et al. \cite{FAPP18} and the references therein). Let $(P,\preceq)$ be a finite partially ordered set (poset). A chain is a subset $S\subset P$ such that any two elements of $S$ are comparable. A subset
$I \subseteq P$ is an ideal of $P$ if $b \in I$ and $a \preceq  b$, implies $a \in I$. The ideal generated by a subset $A$ of $P$ is the ideal of smallest cardinality that contains $A$, denoted by $\langle A\rangle$. Every finite poset $P$ induces a distance (called the $P$-distance) in the function space $\F_q^{P}:=\{x:P \to \F_q\}$ given by $d_{P}(x,y)=|\langle i \in P: x(i)\neq y(i) \rangle|$. The $P$-weight of $x \in \F_q^{P}$ is defined by $w_P(x)=d_P(x,0)$, where $0$ denotes the zero function. Clearly we have the relation $w_P(x-y)=d_P(x,y)$. Let $s,r \in \Z^{+}$;  when $P=[s]$ we identify the set $\F_q^{P}$ with $\F_q^s$ and when $P=[s]\times [r]$ we identify the set $\F_q^{P}$ with $\F_q^{s\times r}$ in the natural way. An NRT poset is a poset with underlying set $P=[s]\times [r]$ and the order given by $(i,j)\preceq (i',j')$ if $i=i'$ and $j\leq j'$. In this case $P$ is the disjoint union of the chains $\{i\}\times [r]$ for $i=1,\ldots,s$ and the $P$-metric regarding this poset is just the NRT metric defined above. In the spacial case that $P$ is an antichain (i.e. there are no two comparable elements) the corresponding $P$-metric is the Hamming metric.

\section{$R$-sticky and $R$-decomposable vectors}\label{Sec:Sticky}

In this section we prove some preliminary results. We start by introducing the notions of $R$-sticky and $R$-decomposable vectors which play an important role in the proof of our main theorem. We also show a connection between the problem of determining when a vector is $R$-decomposable with the knapsack problem.

\subsection{$R$-sticky vectors}

In a broader context we can consider a finite metric space $(X,d)$ and a real number $R>0$. We denote by $B(c,R)=\{x\in X: d(x,c)\leq R\}$, the closed ball centered at $c$.\\

We say that a subset $S\subseteq X$ is $R$-open if $S$ can be written as a (non necessarily disjoint) union of balls of radius $R$ (or $R$-balls for short). An $R$-closed set is defined as the complement of an $R$-open set. We define the $R$-closure operator as $\cl{S}= \{p \in X: \textrm{every $R$-ball containing $p$ also contains some point of $S$}\}.$\\

It is clear by definition that $S\subseteq \cl{S}$ and the equality holds if and only if $S$ is $R$-closed.

\begin{definition}
Let $S\subseteq X$. The elements of $\cl{S}\setminus S$ are called $R$-sticky vectors for $S$.
\end{definition}

Now we consider a group $(X,+)$ embedded with an invariant-by-translation metric $d$ and a subset $C\subseteq X$ whose elements are called {\it codewords}. It is clear that the $R$-ball centered at the origin $B(R)$ satisfies $B(c,R)=c+B(R)$ for every $c\in X$. We say that a set $C\subseteq X$ is an $R$-perfect code (or a perfect code of radius $R$) if $X = C \oplus B(R)$, i.e. the $R$-balls centered at codewords are disjoint and they cover $X$. To avoid trivial cases we also assume here that $|C|>1$ and $C\neq X$. The following lemma has immediate verification.

\begin{lemma}\label{lemma:R-Perfect-Implies-RClosedBall}
If there is an $R$-perfect code in $X$ then the ball $B(R)$ is $R$-closed.
\end{lemma}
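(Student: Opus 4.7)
The plan is to prove the contrapositive form of the statement by showing directly that the complement $X \setminus B(R)$ is $R$-open, since by definition $B(R)$ is $R$-closed exactly when its complement is $R$-open. Because the metric $d$ is translation-invariant, and both the ball $B(R)$ and the notion of $R$-closure behave equivariantly under translations, I would first invoke the normalization mentioned earlier in the paper: any $R$-perfect code may be assumed to contain the zero vector. After this reduction, $B(R) = B(0, R)$ is simply the $R$-ball centered at the codeword $0 \in C$.

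Having normalized the code so that $0 \in C$, the definition of an $R$-perfect code gives the disjoint union decomposition
\[ X \;=\; \biguplus_{c \in C} B(c, R). \]
Isolating the term $c = 0$ and taking the complement, we obtain
\[ X \setminus B(R) \;=\; \biguplus_{c \in C \setminus \{0\}} B(c, R), \]
which exhibits $X \setminus B(R)$ as a union of $R$-balls (non-empty, since the non-triviality assumption $|C| > 1$ guarantees that $C \setminus \{0\}$ is non-empty). By the definition of $R$-open this makes $X \setminus B(R)$ an $R$-open set, and consequently $B(R)$ is $R$-closed, as required.

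An equivalent phrasing, should one prefer a direct verification of $\cl{B(R)} \subseteq B(R)$, would take an arbitrary $p \in X \setminus B(R)$, use the partition to find the unique $c \in C \setminus \{0\}$ with $p \in B(c, R)$, and observe that the ball $B(c, R)$ contains $p$ but is disjoint from $B(R) = B(0, R)$ by the packing property; hence $p \notin \cl{B(R)}$. The argument is really just an unfolding of definitions, and the key insight is that the ``walls'' of a perfect packing are supplied precisely by the remaining codeword-centered balls. There is no substantive obstacle here, which is why the authors flag the verification as immediate; the only point worth recording explicitly is the translation-invariance step that lets us assume $0 \in C$.
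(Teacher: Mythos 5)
Your proof is correct, and it is precisely the ``immediate verification'' the paper has in mind (the paper itself omits the argument): after translating so that $0\in C$, the disjoint cover $X=\biguplus_{c\in C}B(c,R)$ exhibits $X\setminus B(R)$ as the union $\biguplus_{c\in C\setminus\{0\}}B(c,R)$ of $R$-balls, hence $R$-open, so $B(R)$ is $R$-closed by definition. Both of your phrasings (via the complement being $R$-open, and via $\cl{B(R)}\subseteq B(R)$ using the ball $B(c,R)$ disjoint from $B(0,R)$) are valid and equivalent under the paper's definitions.
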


In our case of interest we consider the space $X=\F_{q}^{s\times r}$ provided with the NRT-metric. Our proof of non-existence of perfect codes for the case $\delta>0$ consists in proving that the ball $B(R)$ is not $R$-closed. 

\subsection{$R$-decomposable vectors}

This subsection deals with the problem of determining when two $R$-balls intercept in an NRT-space. In the Hamming case (i.e. $r=1$) we have an easy criterion: $B(x,R)\cap B(x',R)=\emptyset$ if and only if $w(x-x')\geq 2R+1$. This property does not hold in general for NRT-spaces. For example, if $x \in \F_q^{3\times 2}$ is the vector whose three rows equals to $e_2=(0,1)$ and $x'$ equals the zero vector, then $B(x,3)\cap B(3)=\emptyset$ but $w(x-x')=6<2\cdot 3+1$. We prove in this and the next subsections that the problem of determining when two $R$-balls intercept in an NRT-space is equivalent to a special instance of the knapsack problem.\\

The next definition give us a useful criterion to determine when two $R$-balls intersect. We start  introducing the following notation. If $x= \left( \begin{array}{c} x_1 \\ \vdots \\ x_s \end{array} \right) \in \F_q^{s\times r}$ and $I \subseteq [s]$, we denote by $w(x|_I):= \sum_{i\in I}w(x_i)$ and by $x_I \in \F_q^{s\times r}$ the vector obtaining from $x$ substituting each row $x_j$ with $j\not\in I$ by the null vector of $\F_q^{1\times r}$. It is clear that $w(x_I)=w(x|_I)$.

\begin{definition}
Let $R\geq 1$ and $x \in \F_q^{s\times r}$. An $(x,R)$-partition of $[s]$ is a pair $(I,J)$ of subsets of $[s]$ such that $[s]=I \uplus J$, $w(x|_I)\leq R$ and $w(x|_J)\leq R$. If $[s]$ admits an $(x,R)$-partition we say that $x$ is $R$-decomposable. Otherwise, we say that $x$ is $R$-indecomposable.
\end{definition}

Note that a vector $x \in \F_q^{s\times r}$ such that $w(x)\leq R$ is always $R$-decomposable (for example we can take $I=\emptyset$ and $J=[s]$).\\          

The following lemma  establishes the ultrametric property for the NRT-metric in the case $s=1$ and it has a direct verification.

\begin{lemma}\label{lemma:ultrametric}
Let $x,y \in \F_q^{1\times r}$. Then $w(x+y) \leq  \max\{w(x), w(y)\}$. Moreover, if $w(x)\neq w(y)$, then the equality holds.
\end{lemma}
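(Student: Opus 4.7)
The plan is to unpack the definition of the NRT-weight in the case $s=1$ directly and compare coordinates. Without loss of generality assume $w(x)\leq w(y)$, write $a=w(x)$ and $b=w(y)$, so $\max\{w(x),w(y)\}=b$. The trivial cases $x=0$ or $y=0$ give the result immediately, so assume both are nonzero.

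For the main inequality, I would exploit the fact that $w(z)$ for $z\in\F_q^{1\times r}$ is just the index of the last nonzero coordinate of $z$. Hence for every index $i>b$ we have $x_i=0$ and $y_i=0$ by the definition of $a,b$, so $(x+y)_i=0$. Consequently every nonzero coordinate of $x+y$ has index at most $b$, which forces $w(x+y)\leq b=\max\{w(x),w(y)\}$.

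For the ``moreover'' part, suppose $a<b$. Look at the coordinate $i=b$: by definition of $b=w(y)$ we have $y_b\neq 0$, and since $b>a$ the definition of $a$ gives $x_b=0$. Therefore $(x+y)_b=y_b\neq 0$, which shows $w(x+y)\geq b$. Combined with the inequality already proved, we conclude $w(x+y)=b=\max\{w(x),w(y)\}$.

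There is really no technical obstacle here; the whole argument is a direct unwinding of the one-line definition of the NRT-weight for a single row, using only that ``$w(z)$ is the largest index of a nonzero entry of $z$'' and that addition is coordinatewise. The only mild care is to handle the zero vector as a separate case (since the definition splits at $x=0$), but this is immediate.
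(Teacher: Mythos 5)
Your proof is correct, and it coincides with what the paper intends: the paper omits the argument entirely, remarking only that the lemma ``has a direct verification,'' and your coordinatewise unwinding of the definition of the NRT-weight for a single row (including the separate treatment of the zero vector and the strict-inequality case for the ``moreover'' part) is precisely that verification.
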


The ultrametric property will be used in the proof of several results as well as the next lemma.

\begin{lemma}\label{Lemma:Decomposable}
Let $x, x'\in \F_q^{s\times r}$. We have that $B(x,R)\cap B(x',R)\neq \emptyset$ if and only if $x-x'$ is $R$-decomposable.
\end{lemma}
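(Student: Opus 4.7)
\medskip
\noindent\textbf{Proof plan.}
Since the NRT-metric is translation-invariant, $B(x,R)\cap B(x',R)\neq \emptyset$ is equivalent to the existence of some $v\in \F_q^{s\times r}$ with $w(v)\leq R$ and $w(v+y)\leq R$, where $y:=x-x'$. So the statement I must prove is: such a $v$ exists if and only if $y$ is $R$-decomposable. I will prove each direction separately, working row by row and leveraging the ultrametric property from Lemma \ref{lemma:ultrametric}.

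\medskip
\noindent\emph{Easy direction ($\Leftarrow$).} Assume $y$ admits an $(y,R)$-partition $(I,J)$, so $w(y|_I)\leq R$ and $w(y|_J)\leq R$. Define $v\in \F_q^{s\times r}$ by setting its $i$-th row to $-y_i$ for $i\in I$ and to the zero vector for $i\in J$; equivalently $v=-y_I$ in the notation introduced before the definition. Then $w(v)=w(y|_I)\leq R$, while $v+y$ has zero rows on $I$ and agrees with $y$ on $J$, hence $w(v+y)=w(y|_J)\leq R$. So the two balls meet.

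\medskip
\noindent\emph{Hard direction ($\Rightarrow$).} Suppose $v$ exists with $w(v),w(v+y)\leq R$. I will construct the partition row by row by comparing weights: set
\[
J:=\{i\in[s]:w(v_i)\geq w(y_i)\},\qquad I:=[s]\setminus J.
\]
For $i\in J$ the inequality $w(y_i)\leq w(v_i)$ holds by definition, so $w(y|_J)\leq \sum_{i\in J}w(v_i)\leq w(v)\leq R$. For $i\in I$ we have $w(v_i)<w(y_i)$, and here the ultrametric property (Lemma \ref{lemma:ultrametric}) applied in $\F_q^{1\times r}$ forces $w(v_i+y_i)=w(y_i)$; summing over $I$ gives $w(y|_I)=\sum_{i\in I}w(v_i+y_i)\leq w(v+y)\leq R$. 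Thus $(I,J)$ is an $(y,R)$-partition, showing $y$ is $R$-decomposable.

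\medskip
\noindent\emph{Main obstacle.} The delicate point is the $(\Rightarrow)$ direction, where one must redistribute the global weight constraints on $v$ and $v+y$ into row-wise constraints on $y$. The NRT-weight is only ultrametric within a single chain (row), not across rows, so the partition has to be chosen per-row using precisely the strict-inequality case of Lemma \ref{lemma:ultrametric}; this is what makes the definition of $J$ via $w(v_i)\geq w(y_i)$ succeed, since on the complementary set the ultrametric equality case $w(v_i+y_i)=w(y_i)$ kicks in and allows one to bound $w(y|_I)$ by $w(v+y)$.
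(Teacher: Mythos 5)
Your proof is correct and follows essentially the same route as the paper's: reduce by translation invariance, then partition $[s]$ row by row according to a weight comparison and invoke the ultrametric property (Lemma \ref{lemma:ultrametric}) to bound each part by $R$. The only cosmetic difference is that you compare $w(v_i)$ with $w(y_i)$ while the paper compares the two ``halves'' $w(b_i)$ and $w(x_i-b_i)$; both splits work for the same reason.
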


\begin{proof}
Since the NRT-metric $d$ is translation invariant we can assume without loss of generality that $x'=0$. If $b\in B(x,R)\cap B(R)$ then $w(b)\leq R$ and $w(x-b)\leq R$. We consider the sets $I=\{i\in [s]: w(b_i)\geq w(x_i-b_i)\}$ and $J=[s]\setminus I$. By Lemma \ref{lemma:ultrametric} we have $w(x|_I)=\sum_{i\in I}w\left((x_i-b_i)+b_i\right)\leq \sum_{i\in I}w(b_i)\leq w(b)\leq R$ and $w(x|_J)=\sum_{j\in J}w((x_j-b_j)+b_j)=\sum_{j\in J}w(x_j-b_j)\leq w(x-b)\leq R$.  Therefore,  $x$ is $R$-decomposable. Conversely, if $x$ is $R$-decomposable and $[s]=I \uplus J$ with $w(x|_I)\leq R$ and $w(x|_J)\leq R$ then $x-x_I=x_J \in B(x,R)\cap B(R)\neq \emptyset$. \qed
\end{proof}

\subsection{ Relation with the knapsack problem}

Now we briefly discuss about the computational problem of determining if a given vector $x \in \F_q^{s\times r}$ is $R$-decomposable. Next we show that this problem can be reduced to a particular instance of the knapsack problem. Given positive real numbers $v_1,\ldots, v_s$ (called values); $w_1,\ldots, w_n$ (called weights) and $W$ (called the weight capacity of the knapsack); the problem of finding a subset of index $I\subseteq [s]$ which maximize $\sum_{i\in I} v_i$ restricted to the condition $\sum_{i\in I}w_i\leq W$ is known as the knapsack problem \footnote{The version of the knapsack problem we are considering here is sometimes called the $0-1$ knapsack problem.}. 

\begin{proposition}
Let $R\geq 1$, $x= \left( \begin{array}{c} x_1 \\ \vdots \\ x_s \end{array} \right) \in \F_q^{s\times r}$ and $w_i:= w(x_i)$ for $1\leq i \leq s$. Let $I$ be a solution of the knapsack problem with values $w_1,\ldots, w_s$; weights $w_1,\ldots, w_s$ and weight capacity of the knapsack $R$. Denote $J:= [s]\setminus I$. We have that $x$ is $R$-decomposable if and only if $\sum_{j\in J}w(x_j)\leq R$.
\end{proposition}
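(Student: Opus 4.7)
The plan is to prove both implications by directly leveraging the optimality of the knapsack solution $I$. Note that the vector $x$ is $R$-decomposable if and only if there exists any subset $I' \subseteq [s]$ satisfying both $\sum_{i \in I'} w_i \leq R$ and $\sum_{j \in [s] \setminus I'} w_j \leq R$. In other words, $R$-decomposability asks for a single subset that is simultaneously feasible for the knapsack (with capacity $R$) and whose complement is also feasible.

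For the (\textbf{if}) direction, I would argue as follows. Since $I$ is a solution of the knapsack problem with weight capacity $R$, it is in particular feasible, so $\sum_{i \in I} w_i = w(x|_I) \leq R$. If additionally $\sum_{j \in J} w(x_j) = w(x|_J) \leq R$, then the pair $(I, J)$ with $J = [s] \setminus I$ is an $(x, R)$-partition of $[s]$, hence $x$ is $R$-decomposable by definition.

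For the (\textbf{only if}) direction, suppose $x$ is $R$-decomposable, witnessed by an $(x,R)$-partition $[s] = I' \uplus J'$ with $w(x|_{I'}) \leq R$ and $w(x|_{J'}) \leq R$. The key observation is that $I'$ is then a feasible solution to the knapsack instance (since $\sum_{i \in I'} w_i \leq R$). Because $I$ is optimal for this instance (recall values equal weights here), we have $\sum_{i \in I} w_i \geq \sum_{i \in I'} w_i$. Subtracting both sides from $\sum_{i \in [s]} w_i$ yields
\[
\sum_{j \in J} w_j \;\leq\; \sum_{j \in J'} w_j \;=\; w(x|_{J'}) \;\leq\; R,
\]
which is exactly the desired conclusion.

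There is essentially no obstacle to this argument; it is a clean unpacking of definitions once one notices that an $(x,R)$-partition is the same object as a pair consisting of a knapsack-feasible subset whose complement is also knapsack-feasible, and that optimality of $I$ automatically minimizes the complementary sum $\sum_{j \in J} w_j$ over all feasible $I$. The only (minor) point worth stating explicitly in the write-up is that we use the equality of values and weights so that maximizing $\sum_{i \in I} w_i$ under the constraint $\sum_{i \in I} w_i \leq R$ is equivalent to choosing $I$ to minimize $\sum_{j \in J} w_j$ among all subsets whose own weight does not exceed $R$.
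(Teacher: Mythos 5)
Your proposal is correct and follows essentially the same argument as the paper: both directions rest on the feasibility and optimality of the knapsack solution $I$, with the ``only if'' direction obtained by subtracting the inequality $\sum_{i\in I'}w_i\leq\sum_{i\in I}w_i$ from the total weight $w(x)$, exactly as in the paper's proof. No gaps.
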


\begin{proof}
Assume that $x$ is an $R$-decomposable vector and let $(I_0,J_0)$ be an $(x,R)$-partition of $[s]$. Since $\sum_{i\in I_0}w(x_i)\leq R$ and $I$ is a solution of the knapsack problem, we have $\sum_{i\in I_0}w(x_i)\leq \sum_{i\in I}w(x_i)$. Therefore $$\sum_{j\in J}w(x_j) = w(x) - \sum_{i\in I}w(x_i) \leq w(x) - \sum_{i\in I_0}w(x_i) = \sum_{j\in J_0}w(x_j)\leq R. $$ 

To prove the converse we assume now that $\sum_{j\in J}w(x_j)\leq R$ where $I=[s]\setminus J$ is a solution of the knapsack problem with values and weights given as above. Since $I$ is a solution of the knapsack problem we also have $\sum_{i\in I}w(x_i)\leq R$. Thus, $(I,J)$ is an $(x,R)$-partition of $[s]$ and $x$ is $R$-decomposable. \qed
\end{proof}

\section{Non-existence of perfect codes for $\delta \geq 0$}\label{Sec:Main}

In this section we prove our main result, the non-existence of perfect codes for $\delta\geq 0$. The general strategy is to construct a special point $m=m(R)\in \F_{q}^{s\times r}$ with the property that it is an $R$-sticky vector for $B(R)$ whenever $\delta=r(R+1-s)+R>0$ (in particular, this implies the non-existence of perfect codes for this case). Otherwise, if $\delta \leq 0$, we prove that there are points $p\in \F_q^{s\times r}$ satisfying $p \in B(m,R)$ and $B(p,R)\cap B(R)=\emptyset$ (i.e. $m$ is not an $R$-sticky vector). However, such points $p$ have to verify several conditions. Using these conditions, for the case $\delta=0$ we can obtain that we will call an $R$-sticky set for $B(R)$. We use such a set to extend the non-existence result also for this case.\\

Clearly, if $B(R)=\F_{q}^{s\times r}$ then $\operatorname{Perf}(s,r,R)=\emptyset$. For this reason, we always assume that $B(R)\subsetneq \F_{q}^{s\times r}$ (i.e. there is a vector $x\in \F_{q}^{s\times r}$ with $w(x)=R+1$).

We introduce a new parameter $t:=s-R-1$ and write $\left\{\begin{array}{l}
s=R+1+t; \\ R=tr+\delta.
\end{array} \right.$ \\

The following theorem deals with the case $\delta= r(R+1-s)+R\geq 1$. 

\begin{theorem}\label{Th:deltageq1}
Let $s,r,R$ be positive integers with $s\geq 2$. If $\delta \geq 1$, there is an $R$-sticky vector $m \in \F_q^{s\times r}$ for $B(R)$. In particular, there are no $R$-perfect codes in $\F_q^{s\times r}$ regarding the NRT-metric for $\delta\geq 1$.
\end{theorem}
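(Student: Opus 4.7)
The plan is to exhibit an explicit $m = m(R) \in \F_q^{s\times r}$ of weight exactly $R+1$ and prove it is $R$-sticky for $B(R)$; combined with Lemma~\ref{lemma:R-Perfect-Implies-RClosedBall}, this will yield the non-existence of $R$-perfect codes. Writing $R + 1 = qs + r''$ with $0 \le r'' < s$, the assumption $B(R) \subsetneq \F_q^{s\times r}$ (equivalently $R+1 \le sr$) gives $q \le r$, with $q+1 \le r$ whenever $r''>0$. I would take $m$ to consist of $r''$ rows of weight $q+1$ together with $s - r''$ rows of weight $q$; concretely, a row of weight $k\ge 1$ can be chosen to be the vector having its only nonzero entry (a $1$) in coordinate $k$, and a row of weight $0$ is the zero row. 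Since $w(m) = R+1 > R$, we have $m \notin B(R)$, and by Lemma~\ref{Lemma:Decomposable} showing $m \in \cl{B(R)}$ reduces to verifying that $m - b$ is $R$-decomposable for every $b \in B(R)$.

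The first main step is the uniform bound $w(m - b) \le 2R$ for all $b \in B(R)$. Set $\rho_i := w(m_i) \in \{q, q+1\}$; Lemma~\ref{lemma:ultrametric} gives $w((m-b)_i) \le \max(\rho_i, w(b_i))$, so summing over $i$ yields $w(m-b) \le w(m) + \sum_{i : w(b_i) > \rho_i}(w(b_i) - \rho_i)$. Each ``attacking'' row $i$ (where $w(b_i) > \rho_i$) consumes at least $\rho_i + 1 \ge q + 1$ of the budget $w(b) \le R$ in exchange for a contribution of at most $w(b_i) - \rho_i$ to the gain; combining these per-row estimates with the arithmetic identity $\delta = (r+1)(R+1) - sr - 1 \ge 1$ forces $w(m-b) \le 2R$. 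A small separate computation is needed in the subcase $q = 0$ when the adversary's attacks land on zero rows of $m$, and this is precisely where the slack $\delta \ge 1$ prevents the adversary from pushing $w(m-b)$ up to $2R + 1$.

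With the total-weight bound in hand, I would produce the partition $[s] = I \uplus J$ by first isolating the heavy rows $H := \{i : w(b_i) > \rho_i\}$ of $m - b$, whose row-weights sum to at most $w(b) \le R$. Consequently any distribution of $H$ between $I$ and $J$ leaves each side with a heavy-row contribution of at most $R$; the remaining light rows, each of weight at most $q + 1$, are then inserted greedily (largest first) into the residual capacities $R - w((m-b)|_{H\cap I})$ and $R - w((m-b)|_{H \cap J})$. The main obstacle, and the step requiring the most care, is to verify that this greedy insertion never overflows capacity $R$: this is where the bound $w(m-b) \le 2R$ and the uniformity of the row-weights of $m$ (all in $\{q, q+1\}$) combine to rule out the pathological weight patterns---such as three rows of equal weight blocking any balanced two-bin split---for which naive greedy packing can fail. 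Once such a partition is produced, $m - b$ is $R$-decomposable, hence $m$ is $R$-sticky, and the theorem follows.
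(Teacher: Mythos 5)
Your choice of $m$ coincides with the paper's (writing $R+1=(\ell+1)s-h$ with $1\le h\le s$, their $s-h$ rows $e_{\ell+1}$ and $h$ rows $e_\ell$ are exactly your $r''$ rows of weight $q+1$ and $s-r''$ rows of weight $q$), and so does the reduction: by Lemma~\ref{Lemma:Decomposable}, stickiness of $m$ amounts to showing every $c=m-b$ with $b\in B(R)$ is $R$-decomposable. The divergence is in how the $(c,R)$-partition is produced. The paper writes one down explicitly in two cases: for $t\le 0$ it takes $I=\{i:w(c_i)=w(m_i)\}$ and bounds $w(c|_I)\le R+1-\ell|J|\le R$ and $w(c|_J)\le d(c,m)\le R$; for $t\ge 1$ it takes $I=\{i\in[R+1]:c_i=e_1\}$, invoking $\delta\ge1$ only in the subcase $I=[R+1]$. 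You instead prove the global bound $w(m-b)\le 2R$ (your computation is correct, and you correctly locate the only essential use of $\delta\ge1$ in the $q=0$ subcase) and then appeal to a heavy/light greedy packing. Two caveats. First, the $2R$ bound alone cannot close the argument: the paper's own example (three rows $e_2$ in $\F_q^{3\times 2}$, $R=3$) has weight exactly $2R$ and is $R$-indecomposable, so the packing step you defer is where all the remaining content lives. It does go through --- if largest-first fails on a light item of weight $v$ then both parts exceed $R-v$, and combining this with $\sum_{i\in H}w((m-b)_i)\le w(b)\le R$, with $\sum_{i\notin H}w((m-b)_i)\le R+1-\sum_{i\in H}\rho_i$, and (for $q=0$) with your $2R$ bound yields a contradiction in every case --- but ``the uniformity of the row-weights rules out the pathologies'' is an assertion rather than an argument, and your claim that \emph{any} initial distribution of $H$ works is precisely what requires that case analysis. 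Second, the detour is avoidable when $q\ge1$: there $(H,[s]\setminus H)$ is already a valid partition, since $\sum_{i\in H}w((m-b)_i)\le R$ and $\sum_{i\notin H}w((m-b)_i)\le R+1-q|H|\le R$ when $H\ne\emptyset$, while for $H=\emptyset$ one isolates a single heaviest row (of weight at most $q+1\le R$); only $q=0$ genuinely needs your weight bound, mirroring the paper's use of $\delta\ge 1$ only for $t\ge 1$. So your plan is viable and structurally parallel to the paper's, but as written it stops just short of the one step that carries the proof.
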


\begin{proof}
Denote by $\{e_1,\ldots, e_r\}$ the canonical basis of $\F_{q}^{1\times r}$ and  by $e_0$ the null vector of $\F_{q}^{1\times r}$. Let $\ell$ and $h$ be the integers such that $R+1=(\ell+1)s-h$ with $\ell\geq 0$ and $1\leq h \leq s$. Note that $\ell$ is the unique integer such that $\ell s \leq R+1 < (\ell+1)s$. 

We define $m= \left( \begin{array}{c} m_1 \\ \vdots \\ m_s \end{array} \right) \in \F_q^{s\times r}$ where $m_i=\left\{ \begin{array}{cl}
e_{\ell+1} \hspace{.5cm} & \textrm{if \ \ } 1\leq i \leq s-h; \\
e_{\ell} \hspace{.5cm} &   \textrm{if \ \ } s-h+1\leq i \leq s.
\end{array} \right.$

Clearly, $w(m)=(\ell+1)s-h=R+1$. Consider any vector $c\in \mathbb{F}_{q}^{s\times r}$ such that $m\in B(c,R)$. In order to prove that $m$ is an $R$-sticky vector for $B(R)$ we have to show that the intersection $B(c,R)\cap B(R)\neq \emptyset$. By Lemma \ref{Lemma:Decomposable}, it is equivalent to prove that $c$ is $R$-decomposable. The proof will be divided into two cases.\\

Case $t\leq 0$: We have that $R+1\geq s$ and $\ell\geq 1$. If $c=m$, since $s\geq 2$, it is easy to check that the pair $([s-1],\{s\})$ is a $(c,R)$-partition of $[s]$ and therefore $c$ is $R$-decomposable. Otherwise, if $c\neq m$ we consider the sets $I:=\{i\in [s]: w(c_i)=w(m_i)\}$ and $J:=[s]\setminus I$. We assert that $(I,J)$ is a $(c,R)$-partition of $[s]$. Indeed, $w(c|_{I})=\sum_{i\in I}w(m_i)=R+1-\sum_{j\in J}w(m_j)\leq R+1-\ell\cdot |J|\leq R$  and by Lemma \ref{lemma:ultrametric}, we have $w(c|_{J})= \sum_{j\in J}w(c_j) \leq \sum_{j\in J} w(c_j-m_j)\leq d(c,m)\leq R$.\\

Case $t\geq 1$:  We have that $\ell=0$, $R+1=s-h$, $m_i=e_1$ for $1\leq i \leq R+1$ and $m_i=e_0$ otherwise. Consider $I=\{i\in [R+1]: c_i=e_1\}$ and $J=[s]\setminus I$. If $|I|=R+1$ we assert that $(I',J')$ with $I'=[R]$ and $J'=[s]\setminus [R]$ is a $(c,R)$-partition of $[s]$. Indeed, $w(c|_{I'})$ $=\sum_{i=1}^R w(c_i)=R$ and $w(c|_{J'})= w(e_1)+ \sum_{i=R+2}^s w(c_i)\leq 1+(s-R-1)r\leq \delta+tr=R$. Then, it only remains to verify the case  $|I|\leq R$. In this case we assert that $(I,J)$ is a $(c,R)$-partition of $[s]$. Indeed, $w(c|_{I})=|I|\leq R$ and by Lemma \ref{lemma:ultrametric}, $w(c_j)\leq w(c_j-e_1)$ for every $j\in J \cap [R+1]$, then $w(c|_{J}) = \sum_{j \in J\cap [R+1]}w(c_j)+\sum_{j=R+2}^{s}w(c_j) \leq \sum_{j \in J\cap [R+1]}w(c_j-e_1)+\sum_{j=R+2}^{s}w(c_j)= w(c-m)\leq R$.\\

The existence of an $R$-sticky vector for $B(R)$ implies that the ball $B(R)$ is not $R$-closed with respect to the NRT-metric and by Lemma \ref{lemma:R-Perfect-Implies-RClosedBall} there are no $R$-perfect codes in $\F_{q}^{s\times r}$. \qed
\end{proof}

Now we focus on the case $\delta\leq 0$ which corresponds to $R\leq tr$. In particular, $t\geq 1$ and $s=R+1+t\geq R+2$. \\

We introduce the following notation for $x\in \F_q^{s\times r}$. Since $s=(R+1)+t$, we can write   $x=\left( \begin{array}{c} x^{+} \\ x^{-} \end{array} \right)$ where $x^{+}\in \F_q^{(R+1)\times r}$ and $x^{-}\in \F_q^{t\times r}$. It is clear that $w(x)=w(x^-)+w(x^+)$.\\

Note that our proof of the non-existence of $R$-perfect codes for the case $\delta\geq 1$ was based in the fact that the ball $B(R)$ is not $R$-closed but this is not longer true for $\delta\leq 0$. In fact, we have the following proposition. 

\begin{proposition}
If $\delta\leq 0$ then the ball $B(R)$ is $R$-closed.
\end{proposition}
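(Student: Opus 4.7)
The plan is to verify $\cl{B(R)}\subseteq B(R)$: given any $x\in\F_q^{s\times r}$ with $w(x)\geq R+1$, I will exhibit an $R$-indecomposable vector $c\in B(x,R)$, so that $B(c,R)$ is an $R$-ball containing $x$ and disjoint from $B(R)$ by Lemma \ref{Lemma:Decomposable}, whence $x\notin\cl{B(R)}$. A convenient sufficient criterion for $R$-indecomposability is $w(c)\geq 2R+1$: for any partition $[s]=I\uplus J$ one would have $w(c|_I)+w(c|_J)=w(c)>2R$, so $\max\{w(c|_I),w(c|_J)\}>R$. Hence it is enough to construct $c\in B(x,R)$ with $w(c)\geq 2R+1$.

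Write $W:=w(x)$; if $W\geq 2R+1$, take $c=x$. Otherwise $R+1\leq W\leq 2R$, and set $G:=2R+1-W\in[1,R]$ (the gain to be achieved) and $d:=W-R-1\in[0,R-1]$ (the waste budget), so $G+d=R$. Sort the row-weights ascendingly as $w_{(1)}\leq\cdots\leq w_{(s)}$ and put $S_m:=\sum_{i=1}^m w_{(i)}$, $T_m:=mr-S_m$. The crucial estimate is $S_{m_0}\leq d$ for $m_0:=\lceil R/r\rceil$, and this is the only place where the hypothesis $\delta\leq 0$ is used. Indeed, $\delta\leq 0$ rewrites as $R\leq tr$, hence $m_0\leq t$ and $s-m_0\geq R+1$; if $w_{(m_0)}=0$ then $S_{m_0}=0\leq d$, while if $w_{(m_0)}\geq 1$ each of the $s-m_0\geq R+1$ rows with index $>m_0$ has weight $\geq 1$, so $W-S_{m_0}\geq R+1$ and $S_{m_0}\leq d$.

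From $S_{m_0}\leq d$ one gets $T_{m_0}=m_0 r-S_{m_0}\geq R-d=G$, so letting $m^*\leq m_0$ be the smallest $m$ with $T_m\geq G$ one has $S_{m^*}\leq S_{m_0}\leq d$ and $w_{(m^*)}<r$ (otherwise $T_{m^*}=T_{m^*-1}<G$). The vector $c$ is then obtained by greedy pumping of the $m^*$ smallest rows: set $c_i:=x_i$ for $i\notin\{(1),\ldots,(m^*)\}$; set $c_{(i)}:=x_{(i)}+\lambda_i e_r$ with $\lambda_i\neq 0$ for $1\leq i\leq m^*-1$; and $c_{(m^*)}:=x_{(m^*)}+\lambda e_{k^*}$ with $k^*:=w_{(m^*)}+(G-T_{m^*-1})\in(w_{(m^*)},r]$. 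Since $w(x_{(i)})\leq w_{(m^*)}<r$ for each such row, the added summand contributes its own NRT-weight to both $w(c_{(i)})$ and $w(c_{(i)}-x_{(i)})$, giving
\[
w(c)=W+G=2R+1,\qquad w(c-x)=(m^*-1)r+k^*=G+S_{m^*}\leq G+d=R,
\]
as required. The main obstacle is the estimate $S_{m_0}\leq d$, where the hypothesis $\delta\leq 0$ is crucial; once this is in hand, the pumping construction and the verification above are straightforward.
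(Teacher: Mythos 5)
Your proposal is correct and follows essentially the same route as the paper: both proofs reduce the claim to producing, for each $x$ with $R+1\leq w(x)\leq 2R$, a vector $c\in B(x,R)$ with $w(c)\geq 2R+1$ (hence $R$-indecomposable), and both obtain $c$ by adding weight to the lightest rows of $x$, with the hypothesis $\delta\leq 0$ (i.e.\ $R\leq tr$) guaranteeing enough room in those rows. The paper does the bookkeeping by taking a maximal-weight element of the set $S=\{y:w(y)\leq R,\ w(y_i)\geq w(x_i^-)\}$ supported on the $t$ lightest rows, while you pump greedily and track the exact gain $G$ and cost $G+S_{m^*}$; this is a cosmetic difference only.
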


\begin{proof}
We have to prove that for each $x=\left( \begin{array}{c} x^{+} \\ x^{-} \end{array} \right)  \in \F_{q}^{s\times r}$ with $w(x)\geq R+1$ there is a vector $c=\left( \begin{array}{c} c^{+} \\ c^{-} \end{array} \right) \in \F_q^{s\times r}$ such that $w(c-x)\leq R$ and $B(c,R)\cap B(R)=\emptyset$. In the case that $w(x)\geq 2R+1$ we can take $c=x$. Now suppose that $w(x)\leq 2R$. Since permutation of rows are isometries of NRT-spaces, we can assume without loss of generality that $w(x_i)\geq w(x_j)$ for $1\leq i < j \leq s$ which implies $w(x^+)\geq R+1$ and $w(x^-)\leq R-1$. Consider the set $S=\{y \in \F_q^{t\times r}: w(y)\leq R,\ w(y_i)\geq w(x_i^{-}) \textrm{ for } 1\leq i \leq t\}$. Clearly, $S\neq \emptyset$ (because $x^{-} \in S$). Let $c \in \F_{q}^{s\times r}$ be a vector with $c^{+}=x^{+}$ and $c^{-}$ is any element with maximal weight in $S$. We assert that $w(c^{-})=R$. Indeed, if $w(c^{-})<R$, since $tr\geq R$ there is some row $c_{i}^{-}$ of $c^{-}$ with weight $w(c_{i}^{-})=\ell$ for some $\ell<r$. If $c' \in \F_{q}^{t\times r}$ is the vector obtained from $c^{-}$ by substituting their $i$-th row $c_{i}^{-}$ by $e_{\ell +1}+c_{i}^{-}$. We have that $w(c')=w(c^{-})+1\leq R$ and $c'\in S$ which contradicts the fact that $c^{-}$ has maximal weight. By Lemma \ref{lemma:ultrametric}, we have $w(c-x)=\sum_{i=1}^{t}w(c_{i}^{-}-x_{i}^{-})\leq \sum_{i=1}^{t}w(c_{i}^{-})=w(c^{-})=R$ and $w(c)=w(c^+)+w(c^{-})\geq R+1+R=2R+1$ which implies $B(c,R)\cap B(R)=\emptyset$. \qed
\end{proof}

We note that the fact that $B(R)$ is $R$-closed is equivalent to the non-existence of $R$-sticky vectors for $B(R)$. Next we extend the definition of $R$-sticky vectors.

\begin{definition}
Let $X \subseteq \F_{q}^{s\times r}\setminus B(R)$. We say that $X$ is an $R$-sticky set for $B(R)$ if for every cover of $X$ by disjoint $R$-balls, some of the balls intersect $B(R)$. That is, if $c_1,\ldots, c_k \in \F_{q}^{s\times r}$ are such that $B(c_i,R)\cap B(c_j,R)=\emptyset$ for $1\leq i < j \leq k$ and $X \subseteq \bigcup_{i=1}^{k}B(c_i,R)$ then there is an index $i$ such that $B(c_i,R)\cap B(R)\neq \emptyset$.
\end{definition}

To prove the non-existence of perfect codes for $\delta=0$ we need two lemmas, the first of them is an extension of Lemma \ref{lemma:R-Perfect-Implies-RClosedBall} and has direct verification.

\begin{lemma}\label{lemma:R-perfectImpliesNoStickySet}
If there is an $R$-perfect code in $\mathbb{F}_{q}^{s\times r}$ then there are no $R$-sticky sets for $B(R)$.
\end{lemma}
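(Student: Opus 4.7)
The plan is to argue by contrapositive. Assume $C \subseteq \F_q^{s\times r}$ is an $R$-perfect code and let $X$ be any nonempty subset of $\F_q^{s\times r}\setminus B(R)$; I will exhibit a cover of $X$ by disjoint $R$-balls none of which intersects $B(R)$, thereby showing $X$ cannot be $R$-sticky.

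Since translations are isometries and translating a perfect code by $-c_0$ (for some $c_0\in C$) yields another perfect code, I may assume without loss of generality that $0\in C$. By the definition of $R$-perfect code, the balls $\{B(c,R):c\in C\}$ are pairwise disjoint and their union is the whole space. In particular $B(0,R)=B(R)$ is one of these balls, so $B(c,R)\cap B(R)=\emptyset$ for every $c\in C\setminus\{0\}$.

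Now I consider the family $\mathcal{F}:=\{B(c,R):c\in C\setminus\{0\}\}$. The balls in $\mathcal{F}$ are mutually disjoint since $C$ is an $R$-packing. Moreover, because $X\cap B(R)=\emptyset$ and the full family $\{B(c,R):c\in C\}$ covers $\F_q^{s\times r}$, we obtain
\[
X \;\subseteq\; \F_q^{s\times r}\setminus B(R) \;\subseteq\; \bigcup_{c\in C\setminus\{0\}} B(c,R).
\]
Thus $\mathcal{F}$ is a cover of $X$ by pairwise disjoint $R$-balls, every one of which is disjoint from $B(R)$. This directly contradicts the definition of an $R$-sticky set for $B(R)$, completing the proof.

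I do not anticipate any real obstacle here: once one normalizes so that $0\in C$, the perfect code itself supplies the required disjoint cover, and the only thing to verify is that no ball of the cover meets $B(R)$, which is immediate from the $R$-packing condition.
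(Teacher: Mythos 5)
Your proof is correct and is precisely the ``direct verification'' the paper alludes to (no proof is written out there): after normalizing so that $0\in C$, the balls centered at the nonzero codewords give the required disjoint cover of any $X\subseteq \F_q^{s\times r}\setminus B(R)$ avoiding $B(R)$.
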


\begin{lemma}\label{lemma:Restriction-For-m}
Let $s,r$ and $R$ be positive integers with $s\geq 2$ and assume that $\delta=R-rt\leq 0$. Consider the vector $m \in \F_q^{s\times r}$ such that each row of $m^{+}\in \F_q^{(R+1)\times r}$ equals the canonical vector $e_1=(1,0, \cdots, 0)\in\F_{q}^{1\times r}$ and each row of $m^{-}\in \F_q^{t\times r}$ is the zero vector $0\in \mathbb{F}_{q}^{1\times r}$. Let $c\in \F_q^{s\times r}$ be a vector satisfying $c \in B(m,R)$ and $B(c,R)\cap B(R)=\emptyset$. Then, $c^{+}=m^{+}$ and $w(c^{-})=R$.
\end{lemma}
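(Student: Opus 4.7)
The plan is to first translate the hypotheses using Lemma~\ref{Lemma:Decomposable}: the condition $B(c,R)\cap B(R)=\emptyset$ is exactly the $R$-indecomposability of $c$, while $c\in B(m,R)$ reads $w(c-m)=w(c^{+}-m^{+})+w(c^{-})\le R$ (using $m^{-}=0$). The desired conclusion will follow by exhibiting, in each of two sub-steps, a concrete partition $(I,J)$ of $[s]$ that must \emph{fail} to be a $(c,R)$-partition, and reading off the required structural constraint from that failure.

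\textbf{Step 1 ($c^{+}=m^{+}$).} I would define $I:=\{i\in[R+1]:c_i=e_1\}$ and $J:=[s]\setminus I$. Since every row $c_i$ with $i\in I$ has weight $1$, we have $w(c|_I)=|I|$. The crucial bound is $w(c|_J)\le R$, which I would establish row-wise using Lemma~\ref{lemma:ultrametric}: for $i\in J\cap[R+1]$, the row $c_i-m_i=c_i-e_1$ is nonzero, and writing $c_i=e_1+(c_i-e_1)$ the ultrametric gives $w(c_i)\le w(c_i-e_1)$ (strict equality whenever $w(c_i-e_1)\ge 2$, and a direct verification when $w(c_i-e_1)=1$, in which case $c_i-e_1$ is forced to be a nonzero scalar multiple of $e_1$ and hence $c_i$ is a scalar multiple of $e_1$ of weight $\le 1$); for $i\in J\cap[R+2,s]$ one has $m_i=0$ so trivially $w(c_i)=w(c_i-m_i)$. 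Summing both contributions yields $w(c|_J)\le w(c-m)\le R$. Consequently, if $|I|\le R$ the pair $(I,J)$ is a $(c,R)$-partition, so $c$ is $R$-decomposable, contradicting the hypothesis via Lemma~\ref{Lemma:Decomposable}. Hence $|I|=R+1$, which is exactly $c^{+}=m^{+}$.

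\textbf{Step 2 ($w(c^{-})=R$).} Given $c^{+}=m^{+}$ from Step~1, pick any $i^{*}\in[R+1]$ and form $I:=[R+1]\setminus\{i^{*}\}$, $J:=\{i^{*}\}\cup[R+2,s]$. Then $w(c|_I)=R$ and $w(c|_J)=1+w(c^{-})$. If $w(c^{-})\le R-1$, both halves are $\le R$ and $(I,J)$ is a $(c,R)$-partition, giving the same contradiction. Therefore $w(c^{-})\ge R$, and since $w(c-m)\le R$ combined with $c^{+}=m^{+}$ forces $w(c^{-})\le R$, we conclude $w(c^{-})=R$.

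\textbf{Main obstacle.} The whole argument hinges on the choice of partition in Step~1: $I$ is defined using $m$ itself as an ``indicator'' set, and the ultrametric property then transfers the given bound $\sum_i w(c_i-m_i)\le R$ into a bound on $\sum_{i\in J}w(c_i)$ essentially for free. Once this partition is identified, Step~2 is a single-line consequence. Notably no case split on $t$, on the parity of $R$, or on the value of $w(c^{+})$ is required; the argument uses only $w(c-m)\le R$ together with the ultrametric inequality for $s=1$.
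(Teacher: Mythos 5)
Your proposal is correct and follows essentially the same route as the paper: the same partition $I=\{i\in[R+1]:c_i=e_1\}$ with the ultrametric inequality $w(c_j)\le w(c_j-e_1)$ to bound $w(c|_J)$ by $d(c,m)\le R$ and force $I=[R+1]$, and then the same indecomposability argument applied to a partition splitting off $R$ of the first $R+1$ rows to get $w(c^{-})\ge R$. The only cosmetic difference is that the "direct verification" in the case $w(c_i-e_1)=1$ is unnecessary, since Lemma~\ref{lemma:ultrametric} already gives $w(c_i)\le\max\{1,w(c_i-e_1)\}=w(c_i-e_1)$ whenever $c_i\neq e_1$.
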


\begin{proof}
Consider $I=\{i\in [R+1]: c_i=e_1\}$ and $J=\{i\in [R+1]: c_i\neq e_1\}$. Take $J'=J\cup \{R+2, \cdots, s \}$. We have that $I\biguplus J'=[s]$. If $j\in J$, Lemma \ref{lemma:ultrametric} implies that $w(c_j)\leq w(c_j-e_1)$. Then, we have $w(c|_{J'})=\sum_{j\in J'}w(c_j)= \sum_{j\in J}w(c_j)+\sum_{i=R+2}^{s}w(c_i)\leq \sum_{j\in J}w(c_j-e_1)+\sum_{i=R+2}^{s}w(c_i)=d(c,m)\leq R$. By Lemma \ref{Lemma:Decomposable}, $c$ is $R$-indecomposable, thus  $w(c|_{I})=\sum_{i\in I} w(c_i)=|I|>R$. Then, $I=[R+1]$, $c^{+}=m^{+}$ and $w(c^{-})=w(c^{-}-m^{-})=d(c,m)\leq R$. It only remains to prove the inequality $w(c^{-})\geq R$. Again, since $c$ is $R$-indecomposable and $\sum_{i=1}^{R}w(c_i)=\sum_{i=1}^{R} w(e_1)= R$  we conclude that $\sum_{i=R+1}^{s}w(c_i)\geq R+1$.  Therefore, $w(c^{-})=\sum_{i=R+2}^{s}w(c_i)\geq R+1-w(c_{R+1})=R$. \qed
\end{proof}

\begin{remark}\label{Remark:isometry}
To each bijection $\theta:[s]\to [s]$ we can associate a map $\widehat{\theta}:\F_q^{s\times r}\to \F_q^{s\times r}$ such that $\widehat{\theta}(x)_{i}=x_{\theta^{-1}(i)}$ for $1\leq i \leq s$ (i.e. $\widehat{\theta}(e_i)=e_{\theta(i)}$, where $e_i$ is the $i$-th canonical vector). These maps are linear isometries of $\F_q^{s\times r}$ regarding the NRT-metric (because they act as permutation of rows). Consider $m\in \F_{q}^{s\times r}$ as in Lemma \ref{lemma:Restriction-For-m}. If $c \in \F_{q}^{s\times r}$ is such that $\widehat{\theta}(m) \in B(c,R)$ and $B(c,R)\cap B(R)=\emptyset$ then $\widehat{\theta}^{-1}(c)$ is in the hypothesis of Lemma \ref{lemma:Restriction-For-m} and then $c_{\theta(i)}=e_1$ for $1\leq i \leq R+1$ and $\sum_{i=R+2}^{s}w(c_{\theta(i)})=R$. In the next theorem we use this fact with $\theta$ being a cyclic shift.
\end{remark}

\begin{theorem}\label{Th:delta0}
Let $s,r,R$ be positive integers with $s,r\geq 2$. If $\delta = 0$, there is an $R$-sticky set with two elements for $B(R)$. In particular, there are no $R$-perfect codes in $\F_q^{s\times r}$ regarding the NRT-metric for $\delta=0$.
\end{theorem}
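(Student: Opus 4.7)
The plan is to exhibit a two-element $R$-sticky set $X = \{m, m'\}$ for $B(R)$, where $m$ is the vector from Lemma \ref{lemma:Restriction-For-m} and $m' := \widehat{\theta}(m)$ with $\theta$ the cyclic permutation of $[s]$ defined by $\theta(i) \equiv i + t \pmod{s}$ (valued in $[s]$). Both $m$ and $m'$ have NRT-weight $R+1$, hence lie outside $B(R)$; since $t \ge 1$ their nonzero-row supports $[1,R+1]$ and $[t+1,s]$ differ, so $m \ne m'$. Once $X$ is shown to be $R$-sticky, Lemma \ref{lemma:R-perfectImpliesNoStickySet} gives $\operatorname{Perf}(s,r,R) = \emptyset$. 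I argue by contradiction: assume $B(c_1,R), \ldots, B(c_k,R)$ are pairwise-disjoint, cover $X$, and all avoid $B(R)$; then $m \in B(c_a, R)$ and $m' \in B(c_b, R)$ for some $a, b$, and one of two cases occurs.

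In the case $c_a = c_b =: c$, Lemma \ref{lemma:Restriction-For-m} applied directly to $m$ yields $c_j = e_1$ for $j \in [1, R+1]$ with $\sum_{j=R+2}^s w(c_j) = R$, while Remark \ref{Remark:isometry} applied to $(c, m')$ via $\theta$ yields $c_j = e_1$ for $j \in [t+1, s]$. Since $t \le R$ the two index sets union to $[s]$, so every row of $c$ equals $e_1$; then $\sum_{j=R+2}^s w(c_j) = t$, and equating with $R = tr$ forces $r = 1$, contradicting $r \ge 2$.

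In the case $c_a \ne c_b$, I exploit the tightness of the weight bounds from Lemma \ref{lemma:Restriction-For-m}. Since the $t$ ``free'' rows of $c_a$ (indexed by $[R+2, s]$) each have NRT-weight at most $r$ but sum to $R = tr$, every such row has weight exactly $r$, hence nonzero $r$-th coordinate; the symmetric statement holds for $c_b$ on $[1, t]$ via the remark. A row-by-row calculation then gives $(c_a - c_b)_j = 0$ for $j \in [t+1, R+1]$ (both rows equal $e_1$ there), while on $j \in [1, t] \cup [R+2, s]$ the subtraction preserves the nonzero $r$-th coordinate (using $r \ge 2$, so $(e_1)_r = 0$), yielding row-weight $r$. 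Thus $w(c_a - c_b) = 2R$, and the partition $[s] = [1, R+1] \uplus [R+2, s]$ is a $(c_a - c_b, R)$-partition, since each part carries weight exactly $R$. By Lemma \ref{Lemma:Decomposable}, $B(c_a, R) \cap B(c_b, R) \ne \emptyset$, contradicting disjointness.

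The main technical step will be Case 2, where one must combine the saturation of the weight bounds with $r \ge 2$ to show that the $r$-th coordinate survives subtraction, and then use the large overlap $[t+1, R+1]$ (on which both $c_a$ and $c_b$ coincide with $e_1$) to produce exactly enough zero rows to balance the partition into two halves of weight $R$. The hypothesis $r \ge 2$ enters in both cases: in Case 1 it rules out the degeneracy $r = 1$, and in Case 2 it is what distinguishes $e_1$ from the weight-$r$ rows at coordinate $r$.
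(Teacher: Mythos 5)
Your proof is correct, and it follows the same skeleton as the paper's: a two-element sticky set $\{m,\widehat{\theta}(m)\}$ with $\theta$ a cyclic shift, Lemma \ref{lemma:Restriction-For-m} together with Remark \ref{Remark:isometry} to pin down the centers, and Lemma \ref{Lemma:Decomposable} to contradict disjointness. The one genuine difference is the amount of the shift. The paper takes $\theta(i)=i+1$, so the ``free'' rows of the two centers overlap in $[R+3,s]$; since their individual weights are then not determined, the paper only extracts the bounds $w(c_{R+2})\geq 2$ and $w(c'_1)\geq 2$ and must build the $(c-c',R)$-partition adaptively, splitting $[R+3,s]$ according to whether $w(c_i)\geq w(c'_i)$. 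Your shift by $t$ makes the two sets of free rows disjoint ($[R+2,s]$ versus $[1,t]$), so the saturation $\sum w = R = tr$ over $t$ rows of length $r$ forces every free row to have weight exactly $r$; the difference $c_a-c_b$ then has weight exactly $2R$ and splits evenly along the fixed partition $[1,R+1]\uplus[R+2,s]$. This buys a cleaner, non-adaptive decomposition, and a different (but equally valid) disposal of the degenerate case $c_a=c_b$, which the paper instead excludes by comparing $w(c_{R+2})\geq 2$ with $w(c'_{R+2})=1$; note that for $t=1$ the two choices of $\theta$ coincide.
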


\begin{proof}
We consider $m\in \F_{q}^{s\times r}$ as in Lemma \ref{lemma:Restriction-For-m} ; i.e. $m_i=e_1$ for $1\leq i \leq R+1$ and $m_i=0$ for $R+2\leq i \leq s$. Let $\widehat{\theta}:\F_q^{s\times r}\to \F_q^{s\times r}$ be  the cyclic shift map induced by the permutation $\theta(i)=i+1$ for $1\leq i <s$ and $\theta(s)=1$. We assert that $S=\{m, m'\}$ with $m'=\widehat{\theta}(m)$ is an $R$-sticky set for $B(R)$. Indeed, consider $c, c' \in \F_{q}^{s\times r}$ such that $m\in B(c,R)$, $m'\in B(c',R)$, $B(c,R)\cap B(R)=\emptyset$ and $B(c',R)\cap B(R)=\emptyset$. It suffices to prove that $c\neq c'$ and $B(c,R)\cap B(c',R)\neq \emptyset$. 

By Lemma \ref{lemma:Restriction-For-m} and Remark \ref{Remark:isometry} we have that $c_{i}=c'_{i+1}=e_1$ for all $1\leq i \leq R+1$, $w(c^-)=\sum_{i=R+2}^{s}w(c_i)=R $ and $ w(c'_1)+\sum_{i=R+3}^{s}w(c'_i) = R$. Therefore  
$$w(c_{R+2}) =  R - \sum_{i=R+3}^{s}w(c_i) \geq R - (s-R-2)r = R-(t-1)r= \delta +r \geq 2.$$
Analogously $w(c'_1)\geq 2$. Since $w(c_{R+2})\geq 2$ and $c'_{R+2}=e_1$ it is clear that $c\neq c'$. It only remains to show that $c-c'$ is $R$-decomposable. Consider the sets $I'=\{R+3\leq i \leq s : w(c_i)\geq w(c'_i)\}$, $J'=\{ R+3\leq i \leq s : w(c_i)<w(c'_i)\}$, $I=\{R+2\}\bigcup I'$ and $J=\{1,2,\ldots,R+1\}\bigcup J'$.  Clearly $[s]=I\biguplus J$. Using Lemma \ref{lemma:ultrametric} together with the inequality $w(c_{R+2})\geq 2$ we obtain:
\begin{eqnarray*}
w((c-c')|_{I}) & =  & w(c_{R+2}-e_1)+\sum_{i\in I'} w(c_i-c'_i)  \leq    w(c_{R+2})+\sum_{i\in I'} w(c_i)\\
& \leq &  \sum_{i=R+2}^s w(c_i)  =  R
\end{eqnarray*}
Analogously, again by Lemma \ref{lemma:ultrametric} together with the inequality $w(c'_1)\geq 2$ we obtain
\begin{eqnarray*}
w((c-c')|_{J}) & =  & w(e_1-c'_1)+\sum_{i\in J'}w(c_i-c'_i)  = w(c'_1)+\sum_{i\in J'}w(c'_i)  \\
& \leq &  w(c'_1)+ \sum_{i=R+3}^s w(c'_i)  =  R
\end{eqnarray*}
Thus, $c-c'$ is $R$-decomposable and consequently $B(c,R)\cap B(c',R)\neq \emptyset$. \qed
\end{proof}

The next corollary is a direct consequence of Theorems \ref{Th:deltageq1} and \ref{Th:delta0}.

\begin{corollary}
Let $s,r,R$ be positive integers with $s\geq 2$. If there is an $R$-perfect code in $\F_q^{s\times r}$ regarding the NRT-metric then $(R+1)(r+1)\leq sr$. 
\end{corollary}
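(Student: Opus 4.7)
The plan is to prove the contrapositive and observe that the corollary is essentially a repackaging of the two non-existence theorems. The key arithmetic fact is that the inequality $(R+1)(r+1)\leq sr$ is equivalent to $\delta = (r+1)(R+1)-sr-1 \leq -1$, i.e.\ $\delta < 0$. So the corollary is equivalent to the implication: if $\delta \geq 0$ then $\operatorname{Perf}(s,r,R)=\emptyset$.

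I would then split into two cases according to the value of $\delta$. For $\delta \geq 1$, Theorem \ref{Th:deltageq1} applies directly and produces an $R$-sticky vector for $B(R)$; by Lemma \ref{lemma:R-Perfect-Implies-RClosedBall} this prevents $B(R)$ from being $R$-closed and excludes the existence of any $R$-perfect code. For $\delta = 0$, working under the standing hypothesis $r,s\geq 2$, Theorem \ref{Th:delta0} produces an $R$-sticky set for $B(R)$, and Lemma \ref{lemma:R-perfectImpliesNoStickySet} again forces $\operatorname{Perf}(s,r,R)=\emptyset$. Together these two cases cover the whole range $\delta \geq 0$.

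Combining the cases and taking the contrapositive, the existence of an $R$-perfect code in $\F_q^{s\times r}$ forces $\delta \leq -1$, which is precisely $(R+1)(r+1)\leq sr$. There is no substantive obstacle at this stage — all the real work (building the sticky vector in the $\delta\geq 1$ case and the two-element sticky set in the $\delta=0$ case) is already contained in Theorems \ref{Th:deltageq1} and \ref{Th:delta0}; the corollary merely re-expresses the union of their conclusions as the single clean inequality advertised in the abstract.
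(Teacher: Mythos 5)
Your proposal is correct and matches the paper exactly: the paper presents this corollary as a direct consequence of Theorems \ref{Th:deltageq1} and \ref{Th:delta0}, obtained by noting that $(R+1)(r+1)\leq sr$ is the integer inequality $\delta\leq -1$ and taking the contrapositive. Your explicit appeal to the standing hypothesis $r\geq 2$ in the $\delta=0$ case is a point the paper leaves implicit, and is worth keeping.
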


\section{A lifting result and the non-existence of perfect codes for the case $s=R+2$}\label{Section:lifting}

There are several constructions in the literature of new perfect codes from old ones. In this section we identify (in the obvious way) the space $\F_q^{s\times (r+h)}$ with $\F_q^{s\times r} \times \F_q^{s\times h}$. For convenience, we write any point $x=(x',x'')\in \F_q^{s\times (r+h)}$ with $x'\in \F_q^{s\times r}$ and $x''\in \F_q^{s\times h}$ (note that $w(x)\leq r$ implies $x''=0$). A general construction of perfect codes is given by Firer et.al. in \cite[Chapter~4]{FAPP18}. This construction extends some other previous constructions and can be stated as follows.

\begin{proposition}[\cite{FAPP18}]\label{Prop:trivial_lifting}
Let $R,r$ and $h$ be positive integers with $R \leq r$. Let $C'$ be an $R$-perfect code in $\F_q^{s\times r}$. Then, $C:=C'\times \F_{q}^{s\times h}$ is an $R$-perfect code in $\F_{q}^{s\times (r+h)}$.
\end{proposition}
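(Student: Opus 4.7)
The plan is to verify the two defining properties of an $R$-perfect code for $C$---packing (disjointness of $R$-balls at codewords) and covering---using the key observation that because $R \leq r$, a short displacement cannot disturb the last $h$ columns.

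The central remark I would prove first is: if $x=(x',x'') \in \F_q^{s\times(r+h)}$ satisfies $w(x)\leq R$, then $x''=0$. Indeed, if some row $x''_i$ were nonzero, the corresponding row of $x$ would have its maximum nonzero coordinate in a column of index $>r$, so $w(x_i)\geq r+1>R$; since all other row weights are nonnegative, the sum $w(x)=\sum_j w(x_j)$ would already exceed $R$, a contradiction.

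For packing, take distinct-looking codewords $c_1=(c_1',y_1)$ and $c_2=(c_2',y_2)$ in $C$ and suppose some $x=(x',x'')$ lies in $B(c_1,R)\cap B(c_2,R)$. Applying the remark to $x-c_1$ yields $x''-y_1=0$, so $x''=y_1$; symmetrically $x''=y_2$, hence $y_1=y_2$. With the trailing blocks equal, $d(x,c_j)=w(x'-c_j')$ for $j=1,2$, so $x'$ belongs to $B(c_1',R)\cap B(c_2',R)$ inside $\F_q^{s\times r}$. Since $C'$ is an $R$-packing there, $c_1'=c_2'$, and combined with $y_1=y_2$ we get $c_1=c_2$.

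For covering, given any $x=(x',x'')\in \F_q^{s\times(r+h)}$, I would use that $C'$ is an $R$-covering of $\F_q^{s\times r}$ to pick $c'\in C'$ with $w(x'-c')\leq R$; then setting $c:=(c',x'')\in C$ gives $d(x,c)=w((x'-c',0))=w(x'-c')\leq R$. This is the whole proof; I do not anticipate a serious obstacle. The only subtle point is the initial remark, which requires care because it relies on the specific shape of the NRT row weight (the maximum index of a nonzero coordinate) rather than a Hamming-type count; the hypothesis $R\leq r$ is exactly what makes that argument go through.
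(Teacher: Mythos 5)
Your proof is correct, and it follows essentially the same route as the paper: the paper does not prove Proposition~\ref{Prop:trivial_lifting} directly (it is cited from \cite{FAPP18}) but proves the generalization Proposition~\ref{prop:general_lifting}, whose argument for a constant choice function $f\equiv C'$ is exactly yours --- the key remark that $w(x-y)\leq R$ with $R\leq r$ forces $x''=y''$, followed by separate verification of the covering and packing properties. No gaps.
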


We denote by $\Perf(s,r,R)$ the set of non-trivial perfect codes in $\F_q^{s\times r}$ of radius $R$. A direct consequence of Proposition \ref{Prop:trivial_lifting} is that if $\Perf(s,R,R)\neq \emptyset$ then $\Perf(s,r,R)\neq \emptyset$ for every $r>R$.\\

The converse in the Proposition \ref{Prop:trivial_lifting} does not hold in general, that is, there are $R$-perfect codes $C\subseteq \F_q^{s\times (r+h)}$ which cannot be obtained from a single perfect code $C\in \F_q^{s\times r}$. In order to extend the non-existence results given in Theorems \ref{Th:deltageq1} and \ref{Th:delta0} for other values of $\delta<0$ we need a more general construction. 

\begin{proposition}\label{prop:general_lifting}
Let $R,r$ and $h$ be positive integers with $R \leq r$. Assume that $\Perf(s,r,R)\neq \emptyset$. Consider a function $f: \F_{q}^{s\times h}\to \Perf(s,r,R)$ and let $C_{f}:=\{(c',c''): c'' \in \F_{q}^{s\times h}, c'\in f(c'')\}\subseteq \F_q^{s\times (r+h)}$. Then $C_{f}\in \Perf(s,r+h,R)$. Conversely, every $R$-perfect code in $\F_{q}^{s\times (r+h)}$ can be constructed from $R$-perfect codes in $\F_{q}^{s\times r}$ in this way.
\end{proposition}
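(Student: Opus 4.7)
The proof pivots on a simple observation about weights in $\F_q^{s\times(r+h)}$, which I identify with $\F_q^{s\times r}\times \F_q^{s\times h}$ and whose elements I write as $x=(x',x'')$. For each row index $i$, the $i$-th row weight is $w(x'_i)$ when $x''_i=0$, and is $r+w(x''_i)\ge r+1$ otherwise. Since $R\le r$, any two points $(y',y''),(c',c'')\in \F_q^{s\times(r+h)}$ with $y''\neq c''$ are at distance at least $r+1>R$; when $y''=c''$, their distance equals the NRT-distance $d(y',c')$ computed in $\F_q^{s\times r}$. A direct consequence is that the ball of radius $R$ in $\F_q^{s\times (r+h)}$ coincides, under the identification, with the product of the ball of radius $R$ in $\F_q^{s\times r}$ with $\{0\}\subseteq \F_q^{s\times h}$; in particular, these two balls have equal cardinality.

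For the forward direction I would verify that $C_f$ is an $R$-perfect code: for each $y=(y',y'')\in \F_q^{s\times(r+h)}$, any codeword $(c',c'')\in C_f$ with $d(y,(c',c''))\le R$ must satisfy $c''=y''$ by the observation, and then $c'\in f(y'')$ must satisfy $d(y',c')\le R$. Since $f(y'')\in \Perf(s,r,R)$, exactly one such $c'$ exists, giving both existence and uniqueness. Non-triviality of $C_f$ follows from $|C_f|=q^{sh}\cdot |f(0)|\ge 2$ and $|C_f|=q^{s(r+h)}/|B(R)|<q^{s(r+h)}$, where $|B(R)|$ denotes the cardinality of the ball of radius $R$ in $\F_q^{s\times r}$ (which is at least $2$ because $R\ge 1$).

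For the converse, given $C\in \Perf(s,r+h,R)$, I would set $f(c''):=\{c'\in \F_q^{s\times r}:(c',c'')\in C\}$ for each $c''\in \F_q^{s\times h}$ and show $f(c'')\in \Perf(s,r,R)$. Given $y'\in \F_q^{s\times r}$, applying perfectness of $C$ to $(y',c'')$ yields a unique codeword within distance $R$; by the observation it must be of the form $(c',c'')$, so $c'$ is the unique element of $f(c'')$ with $d(y',c')\le R$. Hence each $f(c'')$ is a perfect code of radius $R$ in $\F_q^{s\times r}$, all of the same cardinality $q^{sr}/|B(R)|$. The identity $C=C_f$ is immediate from the definition of $f$. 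I expect the main subtlety to lie in verifying non-triviality of each $f(c'')$: the case $f(c'')=\F_q^{s\times r}$ would force $C=\F_q^{s\times(r+h)}$, contradicting non-triviality of $C$; the case $|f(c'')|=1$ would force the ball of radius $R$ to cover all of $\F_q^{s\times r}$, hence $R\ge sr$, which combined with $R\le r$ and the paper's standing assumption $s\ge 2$ is impossible.
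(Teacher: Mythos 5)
Your proof is correct and follows essentially the same route as the paper's: the key point in both is that $R\le r$ forces two points within distance $R$ to agree in their $\F_q^{s\times h}$-component, after which everything reduces to the slice codes $f(c'')$. The only differences are cosmetic---you phrase perfectness as ``exactly one codeword within distance $R$,'' merging the covering and packing checks, and you additionally verify non-triviality of $C_f$ and of the slices, a point the paper's proof leaves implicit.
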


\begin{proof}
Since $R\leq r$, note that if $x,y \in \F_q^{s\times (r+h)}$ verify $w(x-y)\leq R$, then $x'' = y''$. Now, consider a code $C_{f}\subseteq \F_q^{s\times (r+h)}$ for some function $f: \F_{q}^{s\times h}\to \Perf(s,r,R)$. First we prove that $C_{f}$ is an $R$-covering. Let $x=(x',x'') \in \F_q^{s\times (r+h)}$. Define $c''=x''$. Since $f(c'')$ is an $R$-perfect code in $\F_{q}^{s\times r}$ and $x'\in \F_{q}^{s\times r}$, there is a point $c'\in f(c'')$ such that $w(x'-c')\leq R$. Then, $c=(c',c'')\in C_{f}$ satisfies $w(x-c)=w(x'-c')\leq R$ so $C_{f}$ is an $R$-covering. Now, we prove that $C_{f}$ is an $R$-packing. Let $c_1=(c_1',c_1'')$ and $c_2=(c_2',c_2'')$ be two codewords in $C_{f}$ such that $B(c_1,R) \cap B(c_2,R)\neq \emptyset$. Consider $x=(x',x'')\in \F_q^{s\times (r+h)}$ such that $w(x-c_1)\leq R$ and $w(x-c_2)\leq R$. By the initial observation we have that $c_1''=x''=c_2''$ and then $x' \in B(c_1',R)\cap B(c_2',R)$, where $c_1'$ and $c_2'$ are codewords of the perfect code $f(x'')$. This is possible only if $c_1'=c_2'$, which implies $c_1=c_2$. This prove that $C_{f}$ is an $R$-packing and we conclude that $C_{f}$ is an $R$-perfect code. To prove the converse we consider a perfect code $C\in \Perf(s,r+h,R)$ and define the function\footnote{As usual, $2^S$ denotes the power set of $S$.} $f:\F_{q}^{s\times h}\to 2^{\F_{q}^{s\times r}}$ given by $f(c'')=\{c' \in \F_q^{s\times h}: (c',c'')\in C\}$. We assert that $f(c'')\in \Perf(s,r,R)$, for every $c''\in \F_{q}^{s\times h}$. Indeed, if $x' \in \F_{q}^{s\times r}$ we consider the point $x=(x',c'')$ and the codeword $c_1=(c_1',c_1'')\in C$ such that $w(x-c_1)\leq R$. This implies that $c''=c_1''$ and $w(x'-c_1')=w(x-c_1)\leq R$ with $c_1' \in f(c_1'')=f(c'')$. Thus, $f(c'')$ is an $R$-covering of $\F_{q}^{s\times r}$. To prove that $f(c'')$ is an $R$-packing consider two codewords $c_1',c_2' \in f(c'')$ such that $w(c_1'-c_2')\leq R$. By definition $c_1:=(c_1',c'')$ and $c_2:=(c_2',c'')$ belong to the $R$-perfect code $C$ and $w(c_1-c_2)=w(c_1'-c_2')\leq R$. This is possible only if $c_1=c_2$, which implies $c_1'=c_2'$. This prove that $f(c'')$ is an $R$-packing.
\end{proof}

\begin{corollary}\label{coro:general_lifting}
The following assertions are equivalent:
\begin{itemize}
\item[i)] $\Perf(s,R,R)\neq \emptyset$;
\item[ii)] $\Perf(s,r,R)\neq \emptyset$ for some $r>R$;
\item[iii)] $\Perf(s,r,R)\neq \emptyset$ for every $r>R$.
\end{itemize}
\end{corollary}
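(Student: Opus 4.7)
My plan is to deduce the cycle (i)$\Rightarrow$(iii)$\Rightarrow$(ii)$\Rightarrow$(i) essentially for free from Propositions \ref{Prop:trivial_lifting} and \ref{prop:general_lifting}, which together completely characterize how perfect codes lift between $\F_q^{s\times R}$ and $\F_q^{s\times r}$ for $r>R$.

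The implication (iii)$\Rightarrow$(ii) is immediate. For (i)$\Rightarrow$(iii), given any $r>R$ I will set $h:=r-R\geq 1$ and apply Proposition \ref{Prop:trivial_lifting} to any $C'\in\Perf(s,R,R)$, obtaining $C'\times\F_q^{s\times h}\in\Perf(s,r,R)$. Non-triviality transfers: the size is $q^{sh}|C'|\geq 4$, and the lifted code is not all of $\F_q^{s\times r}$ because $C'\neq \F_q^{s\times R}$.

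The content is in (ii)$\Rightarrow$(i). Given $r>R$ and a code $C\in\Perf(s,r,R)$, I will set $h:=r-R\geq 1$ and invoke the converse part of Proposition \ref{prop:general_lifting} with the role of ``$r$'' there played by $R$; the required hypothesis $R\leq R$ holds trivially. This exhibits $C$ as $C_f$ for some function $f:\F_q^{s\times h}\to 2^{\F_q^{s\times R}}$ whose values lie in $\Perf(s,R,R)$. Evaluating $f$ at any fixed $c''\in \F_q^{s\times h}$ (for instance the zero vector) produces the desired element of $\Perf(s,R,R)$.

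The one subtle point, which I expect to be the main (and only) mild obstacle, is confirming that each fiber $f(c'')$ is \emph{non-trivial} rather than merely a perfect code, since this was not made fully explicit in the proof of Proposition \ref{prop:general_lifting}. The two degenerate possibilities are $|f(c'')|=1$ and $f(c'')=\F_q^{s\times R}$. The first would require a single ball $B(c',R)$ to cover all of $\F_q^{s\times R}$, forcing $sR\leq R$ and contradicting $s\geq 2$; the second would place two $C$-codewords of the form $(c'_1,c'')$ and $(c'_2,c'')$ at NRT-distance $1$, violating the $R$-packing property once $R\geq 1$. With these checks in place the proof is complete.
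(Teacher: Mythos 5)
Your proposal is correct and follows the same route the paper intends: the corollary is stated there without proof as a direct consequence of Propositions \ref{Prop:trivial_lifting} and \ref{prop:general_lifting}, exactly the two results you invoke for (i)$\Rightarrow$(iii) and (ii)$\Rightarrow$(i). Your extra check that each fiber $f(c'')$ is non-trivial (ruling out $|f(c'')|=1$ via $sR>R$ for $s\geq 2$, and $f(c'')=\F_q^{s\times R}$ via the packing property) patches a detail the paper leaves implicit, and both verifications are sound.
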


Now we can extend our non-existence results for some negative value of $\delta$.

\begin{proposition}\label{prop:nonexistenceFromLifting}
If $s=R+2$ and $R\geq 2$ then $\Perf(s,r,R)=\emptyset$.
\end{proposition}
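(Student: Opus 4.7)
The plan is to reduce the proposition to the non-existence results already established for $\delta\geq 0$, using the lifting correspondence of Corollary~\ref{coro:general_lifting} to dispose of the case $r>R$. Substituting $s=R+2$ into the definition of $\delta$ gives
\[
\delta \;=\; (r+1)(R+1) - (R+2)r - 1 \;=\; R-r,
\]
so the sign of $\delta$ is governed entirely by the comparison between $r$ and $R$.

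First I would treat the range $r\leq R$, where $\delta\geq 0$, directly by the main theorems of Section~\ref{Sec:Main}. If $r<R$, then $\delta\geq 1$ and Theorem~\ref{Th:deltageq1} applies (the hypothesis $s\geq 2$ is fine since $s=R+2\geq 4$), giving $\Perf(R+2,r,R)=\emptyset$. If $r=R$, then $\delta=0$ and Theorem~\ref{Th:delta0} applies; here one needs $r,s\geq 2$, which translates into $R\geq 2$ and $R+2\geq 2$, both holding by hypothesis. In particular, this step establishes $\Perf(R+2,R,R)=\emptyset$, which will be the hook for the remaining case.

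For $r>R$, I would argue by contradiction using the lifting equivalence. Suppose $\Perf(R+2,r,R)\neq\emptyset$ for some $r>R$. Corollary~\ref{coro:general_lifting} asserts the equivalence of conditions (i) $\Perf(s,R,R)\neq\emptyset$ and (ii) $\Perf(s,r,R)\neq\emptyset$ for some $r>R$; applying (ii)$\Rightarrow$(i) with $s=R+2$ forces $\Perf(R+2,R,R)\neq\emptyset$, contradicting the previous paragraph. Hence $\Perf(R+2,r,R)=\emptyset$ for every $r>R$ as well, completing the proof. The only delicate point—and the reason for the hypothesis $R\geq 2$—is precisely that the reduced parameter $r=R$ must satisfy $r\geq 2$ in order to invoke Theorem~\ref{Th:delta0}; beyond this, the argument is a purely formal combination of earlier results, with no genuine obstacle.
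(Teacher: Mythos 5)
Your proposal is correct and follows essentially the same route as the paper: the computation $\delta=R-r$ for $s=R+2$, the invocation of Theorems \ref{Th:deltageq1} and \ref{Th:delta0} for $r\leq R$, and the lifting correspondence (Proposition \ref{prop:general_lifting}, equivalently Corollary \ref{coro:general_lifting}) to reduce $r>R$ to the contradictory case $\Perf(R+2,R,R)\neq\emptyset$ with $\delta=0$. The paper merely packages the same argument as a single contradiction rather than an explicit case split.
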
 

\begin{proof}
By contradiction, assume there is an $R$-perfect code $C\subseteq \F_{q}^{s\times r}$. By Theorems \ref{Th:deltageq1} and \ref{Th:delta0} we have $\delta_C:=r(R-s+1)+R=R-r<0$, thus $R<r$. By Proposition \ref{prop:general_lifting} this implies the existence of a perfect code $C'\in \Perf(s,R,R)$ with $\delta_{C'}=R(R-s+1)+R=R-R=0$ which contradicts Theorem \ref{Th:delta0}.
\end{proof}

\begin{corollary}
For every $h\in \Z^+$, there are values of $s,r$ and $R$ such that $\delta=-h$ and $\Perf(s,r,R)=\emptyset$.
\end{corollary}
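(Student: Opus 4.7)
The plan is to exhibit, for each $h\in\Z^+$, a concrete triple $(s,r,R)$ with $\delta=-h$ that lies in the regime already handled by Proposition \ref{prop:nonexistenceFromLifting}. The natural family to try is $s=R+2$ with $R\geq 2$. First I would substitute $s=R+2$ into the identity $\delta=r(R+1-s)+R$ used throughout Section \ref{Sec:Main}; this collapses to $\delta=R-r$. So to force $\delta=-h$ it suffices to pick any $R\geq 2$ and set $r=R+h$, giving the triple $(s,r,R)=(R+2,\,R+h,\,R)$ with $\delta=-h$ by construction.

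The second and final step is to invoke Proposition \ref{prop:nonexistenceFromLifting} on this triple: the hypotheses $s=R+2$ and $R\geq 2$ hold by design, so $\Perf(s,r,R)=\emptyset$ follows at once. There is no real obstacle beyond locating the correct parameter family, since the substantive content (the lifting argument reducing $r>R$ to $r=R$, together with the $\delta=0$ non-existence theorem) is already packaged in Proposition \ref{prop:nonexistenceFromLifting}. As a minor remark worth including, $R$ is a free parameter in this construction, so for each fixed $h$ the corollary is in fact witnessed by an infinite family of triples rather than a single one.
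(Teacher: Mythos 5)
Your proof is correct and follows essentially the same route as the paper: both reduce the corollary to Proposition~\ref{prop:nonexistenceFromLifting} by choosing a family with $s=R+2$, where $\delta=R-r$, and setting $r=R+h$. In fact your parameterization $(s,r,R)=(R+2,R+h,R)$ is the right one, whereas the paper's printed choice $s=r+2+h$, $R=r+h$ actually yields $\delta=+h$ (an apparent sign slip), so your version fixes that.
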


\begin{proof}
Apply Proposition \ref{prop:nonexistenceFromLifting} with $s=r+2+h$, $R=r+h$ and $r\geq 1$.
\end{proof}

\section{Further remarks}\label{Sec:FurtherRemarks}

In this paper we consider codes in the NRT-metric for $s\geq 2$ chains of length $r\geq 2$. We prove the non-existence of $R$-perfect codes if $\delta:=(r+1)(R+1)-sr-1\geq 0$ (Theorems \ref{Th:deltageq1} and \ref{Th:delta0}). Using a lifting construction (Proposition \ref{prop:general_lifting}) we were able to extended the non-existence results for parameters $(s,r,R)$ such that $s=R+2$. The construction given in Proposition \ref{prop:general_lifting} reduces the problem of studying the existence (or non-existence) of perfect codes to the case $R\geq r$. We conjecture that in this case, the only (non-trivial) perfect codes are the perfect Hamming codes mentioned in the introduction.

\end{document}